\documentclass[a4paper]{amsart}

\usepackage{xr-hyper}
\usepackage[colorlinks=true,linkcolor=blue!60!black,citecolor=teal!80!black
, linktocpage=true]{hyperref}
\usepackage{fixmath}

\usepackage[utf8]{inputenc}
\usepackage[english]{babel}

\usepackage{lmodern}

\usepackage[margin=2cm]{geometry} 
\usepackage{fancyhdr} 

\usepackage{cite} 
\usepackage{natbib} 
\usepackage{xcolor}
\usepackage{booktabs}
\usepackage{amsmath,amsfonts,amssymb,amsthm,stackrel,mathrsfs}
\usepackage{mathtools}
\usepackage{bm}
\usepackage{thmtools}
\usepackage{thm-restate}
\usepackage{spectralsequences}
\usepackage{bbm}

\usepackage{enumitem}

\usepackage{float}

\usepackage{psfrag}
\usepackage{amscd}
\usepackage{comment}
\usepackage[all]{xy}
\usepackage{url}

\usepackage{booktabs}

\usepackage{array}
\usepackage{multirow}

\usepackage{mathrsfs}

\usepackage{changepage,parskip}

\newcommand{\ZZ}{\mathbb{Z}}
\newcommand{\QQ}{\mathbb{Q}}

\newcommand{\VV}{\mathbb{V}}
\newcommand{\PP}{\mathbb{P}}
\newcommand{\CC}{\mathbb{C}}

\newcommand{\OO}{\mathbb{O}}

\newcommand{\DD}{\mathbb{D}}

\renewcommand{\AA}{\mathbb{A}}
\newcommand{\ee}{\mathbbm{e}}
\newcommand{\e}{\bm{e}}

\newcommand{\A}{\mathcal{A}}
\newcommand{\C}{\mathcal{C}}
\renewcommand{\O}{\mathcal{O}}

\newcommand{\J}{\mathcal{J}}

\newcommand{\M}{\mathcal{M}}

	\renewcommand{\P}{\mathcal{P}}
	\newcommand{\SSS}{\mathfrak{S}}
	
	\newcommand{\un}{\underline}
	\newcommand{\ov}{\overline}
	
	\newcommand{\wt}{\widetilde}

	\newcommand{\s}{\mathfrak{s}}

	\newcommand{\Mbargn}{\overline{\mathcal{M}}_{g,n}}
	\newcommand{\Ggn}{G_{g,n}}

	\newcommand{\Sym}{\operatorname{Sym}}

	\newcommand{\Res}{\operatorname{Res}}

	\newcommand{\rk}{\operatorname{rk}}
	\newcommand{\BCon}{\operatorname{BCon}}

	\newcommand{\Div}{\operatorname{Div}}
	\newcommand{\Pic}{\operatorname{Pic}}

	\newcommand{\Aut}{\operatorname{Aut}}

	\newcommand{\Simp}{\operatorname{Simp}}

	\usepackage{graphicx}
	\usepackage{epstopdf}
	\usepackage{xcolor}
	\usepackage{rotating}
	\usepackage{tikz-cd}
	\usetikzlibrary{positioning,decorations.text,quotes}
	\usetikzlibrary{arrows,babel}
	
	\usepackage{nameref}

	\usepackage{chngcntr} 
	\usepackage{setspace} 
	\usepackage{relsize} 
	\usepackage{extdash} 
	\usepackage{verbatim} 
	\usepackage{pdfsync} 
	\usepackage{stmaryrd} 
	\usepackage{marvosym} 
	\usepackage{colonequals} 
	\usepackage{float} 
	\usepackage{datetime} 
	\usepackage{url} 
	\usepackage{multicol} 
	\usepackage{color} 
	\usepackage{import} 
	\usepackage[stable]{footmisc} 
	\usepackage{csquotes} 
	
	\usetikzlibrary{cd}
	\usepackage{xcolor}
	\usetikzlibrary{shapes}
	\usetikzlibrary{decorations.markings}

	\newtheorem{theorem}{Theorem}[section]
	\newtheorem{corollary}[theorem]{Corollary}
	\newtheorem{lemma}[theorem]{Lemma}
	
	\newtheorem*{fact*}{Fact}
	\newtheorem{proposition}[theorem]{Proposition}
	\newtheorem{proposition-definition}[theorem]{Proposition-Definition}
	\newtheorem{lemma-definition}[theorem]{Lemma-Definition}
	\theoremstyle{definition}
	
	\newtheorem{remark}[theorem]{Remark}

	\numberwithin{equation}{section}

	\usepackage{xpatch}
	\let\sectionnotoc\section
	\xpatchcmd{\sectionnotoc}{{1}}{{1001}}{}{}
	\let\subsectionnotoc\subsection
	\xpatchcmd{\subsectionnotoc}{{2}}{{1001}}{}{}
	
\begin{document}
	
\title{Cohomology of the fine compactified universal Jacobians $\ov J_{2}$ and $\ov J_{2,1}$}

\author{Marco Fava}
\address{Marco Fava, Mathematics Institute, University of Warwick, Coventry, CV4 7AL, United Kingdom}
\email{marco.fava@warwick.ac.uk}

\begin{abstract}
	We compute the rational cohomology of the fine compactified universal Jacobians over the moduli spaces of stable curves of genus $2$ with $0$ and $1$ marked points.
\end{abstract}

\maketitle
\bigskip

\tableofcontents

\section{Introduction}
The universal Jacobian $J^\chi_{g,n}$ over $\ov \M_{g,n}$ parametrises pairs $(C,p_1,\ldots,p_n,L)$ given by a stable pointed curve $(C,p_1,\ldots,p_n)$ and a line bundle $L$ of characteristic $\chi$ on $C$. This, in general, is not proper. A \emph{fine compactified universal Jacobian} $\ov J^\chi_{g,n}$ of type $(g,n)$ and characteristic $\chi$ over $\Mbargn$ is a proper open substack of $\Simp^\chi_{g,n}$, the rigidified stack of relative simple torsion-free rank $1$ sheaves of characteristic $\chi$ on the universal curve $\C_{g,n}/\Mbargn$.

The study of compactified universal Jacobians started over thirty years ago with the work of Caporaso \cite{caporaso}, and was subsequently developed by many authors. Recently, a complete classification was achieved in \cite{fava2024}, building on the work of Pagani--Tommasi \cite{PT24} and Viviani \cite{viviani2023new}. For the classification in the general non-fine case and a thorough treatment of the history of this topic, we refer to \cite{FPV3} and the references therein.

The geometry of compactified universal Jacobians depends on the choice of the characteristic $\chi$ and of some stability condition $\sigma$ (see \cite[Corollary~6.19]{Kass_2019} or \cite[Theorem~D]{FPV3}). However, \cite[Theorem~1.7]{MSV} and, more recently, \cite[Theorem~1.2]{PPSW} showed that the rational cohomology groups of a fine compactified universal Jacobian $\ov J^\chi_{g,n}(\sigma)$ are independent of both the characteristic and the stability condition. For this reason, $\ov J_{g,n}$ will denote any compactified universal Jacobian of type $(g,n)$ for the remainder of the paper. (In the general non-fine case, the same holds by \cite[Theorem~1]{wu2026supporttheoremuniversalcompactified}.) 

Fine compactified universal Jacobians $\ov J_{0,n}$ are trivially isomorphic to the moduli spaces of curves $\ov \M_{0,n}$.
The rational cohomology of the fine compactified universal Jacobians $\ov J_{1,n}^\chi$ was computed by Pagani--Tommasi in \cite{PTgenus1}, via comparison with the rational cohomology of $\ov\M_{1,n+1}$.
The purpose of this paper is to carry out the first explicit computations in genus $2$. 

We determine the rational cohomology and Chow groups of the fine compactified universal Jacobians $\ov J_2$ over $\overline{\mathcal M}_2$ and $\ov J_{2,1}$ over $\overline{\mathcal M}_{2,1}$. Indeed, such moduli spaces exist and are unique up to isomorphism by \cite[Theorem~G]{FPV3}. Our main result is the following. Let $L=[\QQ(-1)]\in K_0(\mathrm{MHS}_{\QQ})$ denote the Tate class. 
\begin{theorem}\label{T:CohJ2} (Proposition~\ref{P:eJ2}, Proposition~\ref{P:eJ21}, Lemma~\ref{L:chow}) 
	\begin{enumerate}
		\item The fine compactified universal Jacobians $\ov J_2$ and $\ov J_{2,1}$ have Tate type rational cohomology given by the Hodge--Euler characteristics
		$$
		\e_{HE}(\ov J_2)=L^5+3L^4+6L^3+6L^2+3L+1,
		$$
		and
		$$
		\e_{HE}(\ov J_{2,1})=L^6+5L^5+15L^4+20L^3+15L^2+5L+1.
		$$
		\item For $n=0,1$ and $k\in\ZZ$, the cycle class map $\operatorname{CH}^k_{\QQ}(\ov J_{2,n})\to H^{2k}(\ov J_{2,n},\QQ)$ is an isomorphism.
	\end{enumerate}
\end{theorem}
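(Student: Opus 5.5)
The plan is to compute the Hodge--Euler characteristic by stratifying $\ov J_{2,n}$ according to the topological type of the underlying stable curve, i.e. by the strata of $\ov\M_{2,n}$ indexed by stable graphs $G$. Over the stratum $\M_G$, the fine compactified Jacobian restricts to a union of pieces indexed by the stable multidegrees on partial normalisations of the graph. Each piece is a fibration whose fibre over a curve $C$ is the generalised Jacobian of a partial normalisation, i.e. an extension of a product of Jacobians of the components by a torus $\mathbb{G}_m^{b_1}$. Since the rational cohomology is independent of $\chi$ and $\sigma$ (\cite[Theorem~1.7]{MSV}, \cite[Theorem~1.2]{PPSW}), I am free to fix one convenient stability condition. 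For $n=0$ and $n=1$ I take a generic, symmetric-as-possible one, so that the count of stable multidegrees per subgraph equals its complexity (number of spanning trees), as in \cite{fava2024,viviani2023new}.

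Additivity of $\e_{HE}$ on locally closed stratifications then gives
\[
\e_{HE}(\ov J_{2,n})=\sum_{G}\ \sum_{S\subseteq E(G)} c(G\setminus S)\,\e_{HE}\big(\mathrm{Pic}^0\text{-torsor over }\M_G\text{ for } G\setminus S\big),
\]
suitably quotiented by $\Aut(G)$. Here $c(G\setminus S)$ is the number of spanning trees, and the sum runs over those $S$ with $G\setminus S$ connected. For each stratum the motivic contribution factors as $(L-1)^{b_1(G\setminus S)}$ times the equivariant Hodge--Euler characteristic of the family of Jacobians of the vertex curves over $\prod_v\M_{g_v,n_v}$. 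This factorisation is valid because the torus part contributes $(L-1)^{b_1}$ and the relevant torsors are Zariski locally trivial at the level of $K_0$, so long as I work with a fine family, which exists because the space is fine. The needed inputs are the $\Sym$/$\SSS$-equivariant Hodge--Euler characteristics of the universal Jacobians (equivalently of $\mathcal{C}^{(k)}$ via Abel--Jacobi) over $\M_{0,m}$, $\M_{1,m}$ and $\M_{2}$, $\M_{2,1}$, for small $m$.

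The genus-one pieces are handled via $\mathrm{Pic}^d_{\M_{1,m}}\cong\M_{1,m+1}$-type identifications, as in \cite{PTgenus1}. The genus-two open pieces use the known $\e_{HE}$ of the universal Jacobian over $\M_2$ and $\M_{2,1}$. I expect these to be Tate, e.g. via the Abel--Jacobi map $\C^{(2)}_{2}\to J_2$, which is a $\PP^1$-bundle off the canonical section, together with the known Tate cohomology of $\M_{2,m}$ for $m\le 3$. The main obstacle is the bookkeeping of automorphisms: the $\SSS_2$ swapping the components of a genus-$1$ + genus-$1$ curve, the hyperelliptic involution, and graph automorphisms that permute multidegrees all require taking invariant parts of the equivariant characteristics rather than naive products. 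I will handle this with the equivariant formalism stratum by stratum.

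Summing gives polynomials in $L$. Palindromicity, and the value at $L=1$ (the Euler characteristic), serve as checks. Since the result is a polynomial in $L$ and the space is smooth and proper (a DM stack with coarse space having rational cohomology), purity forces $H^{2k}$ to be pure of type $(k,k)$ with the stated dimensions and odd cohomology to vanish. This gives (1).

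For (2), every stratum is a quotient of a product of tori and of Jacobian families over $\M_{0,m}$, $\M_{1,m}$, $\M_2$, $\M_{2,1}$. I will argue that each has Chow groups generated by classes making the cycle map surjective: each stratum should have the Chow–Künneth generation property, which holds for linear varieties and for these low-genus moduli (\cite{PTgenus1}-type arguments). By the localisation sequence, $\operatorname{CH}_\QQ$ surjects compatibly, hence the cycle class map $\operatorname{CH}^*_{\QQ}(\ov J_{2,n})\to H^{2*}$ is surjective. Since the target is all algebraic and the space is smooth proper with Chow–Künneth generation, the standard argument (Totaro / Kimura-type) yields injectivity as well, so it is an isomorphism.
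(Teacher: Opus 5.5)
Your overall architecture matches the paper's: the stable-graph stratification of the base, the toroidal stratification of the fibres, equivariant Hodge--Euler characteristics on $\wt\M_\Gamma$, $\Aut(\Gamma)$-invariants and purity, and for (2) the CKgP with Lemma~3.11 of Canning--Larson. However, the step that makes it computable is wrong.

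You assert that a toroidal stratum indexed by a connected spanning subgraph $G'$ contributes the scalar $(L-1)^{b_1(G')}$ times the equivariant characteristic of the vertex Jacobian family, because torus torsors are Zariski locally trivial. That is a non-equivariant statement. The torus $H^1(G',\CC^*)=\operatorname{Hom}(H_1(G',\ZZ),\CC^*)$ is acted on by the stabiliser $H\le\Aut(\Gamma)$ of $(G',\un d)$ through its action on $H_1(G',\ZZ)$; over $\M_\Gamma$ the torus is twisted by this monodromy. Equivariantly
\[
\e^{H}_{HE}\bigl(\operatorname{Hom}(H_1(G',\ZZ),\CC^*)\bigr)=\sum_{k=0}^{b_1(G')}(-1)^kL^{b_1(G')-k}\,[\wedge^kH_1(G',\QQ)],
\]
which equals $(L-1)^{b_1(G')}$ only after forgetting the action. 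Your list of automorphism effects misses exactly this.

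On $\M_{1,2}/\SSS_2$ the involution reverses the loop and inverts $\CC^*$, so $\e^{\SSS_2}_{HE}(\CC^*)=L\s_2-\s_{1^2}$. With $\e^{\SSS_2}_{HE}(\C_{1,2})=(L^3+L^2)\s_2+(L^2-1)\s_{1^2}$, the codimension-$0$ piece is $L^4+L^3-L^2+1$, whereas your factorisation gives $(L-1)(L^3+L^2)=L^4-L^2$. Nothing else in the top half of $\e_{HE}(\ov J_2)$ is affected, so you would obtain $L^5+3L^4+5L^3+\cdots$, contradicting the statement. The same error recurs wherever $\Aut(\Gamma)$ acts nontrivially on cycles ($\M_{1,3}/\SSS_2$, $\M_{0,5}/\SSS_2\wr\SSS_2$, $(\M_{1,2}\times\M_{0,3})/\SSS_2$, \ldots).

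The repair is the paper's. Translations by the connected torus act trivially on fibre cohomology, so $R^q\pi_!\QQ$ is the constant sheaf $H^q_c(T)$ with $H$ acting diagonally. You multiply the equivariant classes in $K_0(\mathrm{MHS}_\QQ)\otimes R(H)$ and only then take invariants.

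Three smaller points:
\begin{enumerate}
\item The Abel--Jacobi map $\C_2^2/\SSS_2\to J^2_2$ is an isomorphism off the canonical section and a $\PP^1$-bundle over it, not the reverse. This gives $\e_{HE}(J_2)=\e_{HE}(\C_2^2/\SSS_2)-L\,\e_{HE}(\M_2)$.
\item In (2), surjectivity of cycle maps does not propagate through localisation sequences; a $\CC^*$-stratum already has odd cohomology not hit by cycles. What propagates is the CKgP, through stratifications, finite quotients and open subsets of projective bundles. Lemma~3.11 of Canning--Larson then gives the isomorphism directly for the smooth proper stack.
\item You still need the CKgP for $J_2$ and $J_{2,1}$ themselves. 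The paper obtains it from the proper surjective Abel--Jacobi maps out of $\C_2^2/\SSS_2$ and $\C^3/\SSS_2$, which are stratified by $\M_{2,m}$ with $m\le 3$.
\end{enumerate}
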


Recall that the polynomial of $\ov\M_2$ is $L^3+2L^2+2L+1$ (Mumford \cite{Mumford1983}) and the one of $\ov\M_{2,1}$ is $L^4+3L^3+5L^2+3L+1$ (Faber \cite{Faber1988} and Getzler \cite{GetzlerTopologicalRecursionGenus2}).

The proof of the Theorem is based on the simultaneous use of two stratifications: the moduli space $\overline{\mathcal M}_{g,n}$ is stratified by stable dual graphs, while each fibre of a fine compactified universal Jacobian admits a toroidal stratification indexed by connected spanning subgraphs and admissible multidegrees. 
We compute the equivariant Hodge--Euler characteristic of each stratum while retaining the action of the relevant graph automorphism group. Taking invariants and using additivity then yields the Hodge--Euler characteristic of the compactified universal Jacobian. As the latter is a smooth and proper DM-stack, its cohomology is determined by its Hodge--Euler characteristic. Finally, the fact that the cycle map is an isomorphism follows as an application of \cite[Lemma~3.11]{CanningLarson} by Canning and Larson.

The methods used in this paper essentially follow the strategy outlined at the beginning of \cite[Section~3.5]{PPSW}. While providing explicit computations for the smallest unknown cases, we lay the ground for the general formulas for the Hodge numbers of fine compactified universal Jacobians to appear in \cite{FKS}.

Full computations for the compactified Jacobian spaces relative to all the strata in the boundary of $\ov\M_{2}$ and $\ov\M_{2,1}$ are available as an ancillary file.

\subsectionnotoc{Acknowledgments.} The author wishes to thank Nicola Pagani for introducing him to the problem and dedicating time to answering his questions. He is also grateful to Terry Song and Siddarth Kannan for helpful discussions and comments on a previous version which led to some improvements. The author is supported by the EPSRC Fellowship EP/X02752X/1.

\subsectionnotoc{AI disclosure.} Artificial intelligence (GPT-5.6 Sol, made available by the University of Warwick) was used by the author exclusively for reviewing and polishing purposes.

\section{Preliminaries}\label{S:Preliminaries}

In this section we fix the notation used in the computations below. We recall the two stratifications that will be used simultaneously: the boundary stratification of $\ov\M_{g,n}$ by dual graphs and the toroidal stratification of the fibres of a fine compactified universal Jacobian induced by a PT-assignment, or equivalently by a $V$-function.  Finally, we recall the equivariant Hodge--Euler characteristic, which is the additive invariant computed in the proof of Theorem~\ref{T:CohJ2}.

\subsection{Stable graphs and boundary strata}\label{SS:BoundaryStrata}

Let $\Ggn$ be a set of representatives for the isomorphism classes of stable $n$-pointed graphs of genus $g$.  If $\Gamma\in\Ggn$, we write $V(\Gamma)$ and $E(\Gamma)$ for its vertices and edges, $g(v)$ for the genus of the vertex $v$, and $n(v)$ for the number of legs and half-edges incident to $v$.  The corresponding locally closed stratum of $\ov\M_{g,n}$ is denoted by $\M_\Gamma$; explicitly,
\begin{equation*}
	\M_\Gamma=
	\left(\prod_{v\in V(\Gamma)}\M_{g(v),n(v)}\right)\big/\Aut(\Gamma).
\end{equation*}
The clutching morphism
\begin{equation*}
	\xi_\Gamma:\wt\M_\Gamma:=\prod_{v\in V(\Gamma)}\M_{g(v),n(v)}\longrightarrow \M_\Gamma\subset \ov\M_{g,n}
\end{equation*}
is the finite quotient by $\Aut(\Gamma)$.

The stratification
\begin{equation}\label{E:TopStratification}
	\ov\M_{g,n}=\bigsqcup_{\Gamma\in\Ggn}\M_\Gamma
\end{equation}
is compatible with specialisation: if a point of $\M_{\Gamma'}$ lies in the closure of $\M_\Gamma$, then $\Gamma$ is obtained from $\Gamma'$ by contracting a set of edges.  This is the stratification over which all the relative compactified Jacobians appearing below will be computed.

\subsection{Compactified Jacobians}
By \cite[Theorem~A]{fava2024} and, more generally \cite[Theorem~A]{FPV3}, fine compactified universal Jacobians are classified by \emph{universal V-stability conditions}, or, equivalently, by \emph{V-functions} (\cite[Proposition~3.11]{FPV3}). Let $X$ be a stable curve with markings such that $[X]\in\ov\M_{g,n}$, and let $\sigma$ be a V-function of type $(g,n)$ and characteristic $\chi$. Then, by \cite[Theorem~1.26]{viviani2023new} and, more generally \cite[Theorem~3.32, Proposition~4.19]{FPV2}, $\sigma$ induces a \emph{PT-assignment} $\P_{\sigma,\Gamma_X}$ of degree $d=\chi+g-1$ on the dual graph $\Gamma_X$ of $X$ (\cite[Definition~3.9]{FPV2}). Note that in the case of a general V-stability condition, the notion of a PT-assignment coincides with that of a stability assignment of \cite[Definition~4.4]{PT24}.

We recall that a general PT-assignment of a fixed degree $d$ on $\Gamma_X$ is an upper subset of 
\begin{equation*}
	\OO_\emptyset^d(\Gamma_X)=\{(G,\underline{d}):G\leq\Gamma_X\textup{ connected and spanning, }\underline{d}\in\Div^{d-|E(G)^c|}(G)\},
\end{equation*}
satisfying the properties of \cite[Definition~3.9(1)]{FPV2}.
In particular, the fine compactified Jacobian associated to $\P_{\sigma,\Gamma_X}$ is stratified in locally closed subschemes as follows:
\begin{equation}\label{E:ToroidalStratification}
	\ov J(\P_{\sigma,\Gamma_X})=\bigsqcup_{(G,\underline{d})\in\P_{\sigma,\Gamma_X}}J_{X(E(G))}^{\underline{d}},
\end{equation}
where $J_{X(E(G))}^{\underline{d}}$ denotes the multidegree $\underline{d}$ generalised Jacobian of the partial normalisation $X(E(G))$ of $X$ at the nodes corresponding to $E(\Gamma_X)\setminus E(G)$. 
From the normalisation exact sequence of $X(E(G))$, we have that 
\begin{equation*}
	J^{\un d}_{X(E(G))}\xrightarrow{\nu^*} J^{\un d}_{\wt{X}}
\end{equation*}
is a principal $(\CC^*)^{b_1(G)}=H^1(G,\CC^*)\cong \operatorname{Hom}(H_1(G,\ZZ),\CC^*)$-bundle.

We observe that the strata of $\ov J(\P_{\sigma,\Gamma_X})$ are grouped by their codimension, which equals $b_1(\Gamma_X)-b_1(G)$. In particular, the codimension-zero strata, and hence the irreducible components, correspond to the multidegrees in $\P_{\sigma,\Gamma_X}(\Gamma_X)$, and the maximal codimension strata are indexed by the spanning trees, since $\P_{\sigma,\Gamma_X}(G)$ is a singleton whenever $b_1(G)=0$ when $\sigma$ is general, by \cite[Lemma~3.33]{FPV2}. 
Finally, for a general PT-assignment, for every connected spanning subgraph $G\leq\Gamma_X$, the natural projection $\Div(G)\twoheadrightarrow\Pic(G)$ induces a bijection
$$
\P_{\sigma,\Gamma_X}(G):=\{\underline{d}:(G,\underline{d})\in\P_{\sigma,\Gamma_X}\}\leftrightarrow\Pic^{d-|E(G)^c|}(G),
$$
and, thus, the cardinality of the strata of $\ov J(\P_{\sigma,\Gamma_X})$ indexed by $G$ equals the complexity (i.e. the number of spanning trees) of $G$.

For a fixed topological stratum $\M_\Gamma$, we denote by
\begin{equation*}
	\ov J_\Gamma:=\ov J_{g,n}\times_{\ov\M_{g,n}}\M_\Gamma
\end{equation*}
the corresponding stratum of the universal compactified Jacobian, and, similarly, we denote by 
$$
\wt J_\Gamma:=\ov J_\Gamma\times_{\M_\Gamma}\wt\M_\Gamma,
$$
the pullback. 
For every connected spanning subgraph $G\leq \Gamma$, and every $\un d\in\P_{\sigma,\Gamma}(G)$, let 
$$J_{\Gamma,E(G)}^{\un d}\subseteq \wt J_\Gamma$$
denote the relative toroidal stratum whose fibre over a curve $X$ of type $\Gamma$ is $J^{\un d}_{X(E(G))}$. The multidegree superscript will be omitted when $|\P_{\sigma,\Gamma}(G)|=1$. Combining \eqref{E:TopStratification} and \eqref{E:ToroidalStratification}, for any V-function $\sigma$, we have the decomposition
\begin{equation*}
	\ov J_{g,n}(\sigma)
	=
	\bigsqcup_{\Gamma\in\Ggn}\left(
	\bigsqcup_{(G,\un d)\in\P_{\sigma,\Gamma}}
	J^{\un d}_{\Gamma,E(G)}\right)/\Aut(\Gamma).
\end{equation*}

\subsection{Equivariant Hodge--Euler characteristics}\label{SS:HE}

Let $H$ be a finite group acting on a complex algebraic variety or Deligne--Mumford stack $Y$. Then, the cohomology groups $H^i_c(Y,\QQ)$ naturally carry the structure of an $H$-representation, and we define its $H$-equivariant Hodge--Euler characteristic by
\begin{equation}\label{E:EquivariantHE}
	\e_{HE}^{H}(Y):=
	\sum_i(-1)^i\,[H^i_c(Y,\QQ)]
	\in K_0(\mathrm{MHS}_{\QQ})\otimes R(H),
\end{equation}
where $R(H)$ is the representation ring of $H$. The product in $K_0(\mathrm{MHS}_{\QQ})\otimes R(H)$ is induced by the diagonal action of $H$. We denote by $\e_{HE}(Y)$ the image of $\e_{HE}^{H}(Y)$ in $K_0(\mathrm{MHS}_{\QQ})$ and by $\e_{HE}^{H}(Y)^H$ the coefficient of the trivial representation.

The invariant \eqref{E:EquivariantHE} is additive for locally closed decompositions and multiplicative for products.  Thus, if $Y=\bigsqcup_\alpha Y_\alpha$ is a stratification by $H$-invariant strata, then
\begin{equation*}
	\e_{HE}^{H}(Y)=\sum_\alpha \e_{HE}^{H}(Y_\alpha).
\end{equation*}

We are interested in computing the equivariant Hodge--Euler characteristic of the $\Aut(\Gamma)$-invariant toroidal strata
\[
	\bigsqcup_{(\un d,G)}J^{\un d}_{\Gamma,E(G)}\xrightarrow{\pi} \wt\M_\Gamma,
\]
via their Leray spectral sequences, which, by results of \cite{Arapura_2004}, are functorial and compatible with mixed Hodge structures. Therefore, the Hodge--Euler characteristic of the total space of a stratum is computed as the alternating sum of the compactly supported cohomology groups of the base with coefficients in the local systems $R^q\pi_{!}\QQ$.


Summing first over the toroidal strata and then over the boundary strata gives
\begin{equation*}
	\e_{HE}(\ov J_{g,n})
	=\sum_{\Gamma\in \Ggn}\e_{HE}(\ov J_\Gamma)=
	\sum_{\Gamma\in\Ggn}
	(\e_{HE}^{\Aut(\Gamma)}(\wt J_\Gamma))^{\Aut(\Gamma)}.
\end{equation*}

For $n>0$ we keep track of the natural $\SSS_n$-action by writing the equivariant Hodge--Euler characteristic in
\[
	K_0(\mathrm{MHS}_{\QQ})\otimes \Lambda_n,
\]
where $\Lambda_n$ is the ring of degree $n$ symmetric functions and is identified with the representation ring of $\SSS_n$ via Schur polynomials (see \cite[Theorem~4.8]{Getzler96}).

\section{Proof of the Theorem}\label{S:Proof}
Since the cohomology of a fine compactified universal Jacobian $\ov J(\sigma)$ is independent of the V-function, in this section we fix the general V-function of characteristic $\chi=-1$
$$
\sigma\equiv 0\colon\DD_{g,n}\to\ZZ,
$$
well defined for each pair $(g,n)$, and we set $\ov J_{g,n}:=\ov J(\sigma)$.

We prove Theorem~\ref{T:CohJ2} by computing the Hodge--Euler characteristics of $\ov J_2$ and $\ov J_{2,1}$ in Proposition~\ref{P:eJ2} and Proposition~\ref{P:eJ21}, and addressing the isomorphism with the Chow groups in Lemma~\ref{L:chow}. The following Lemma will be useful.
\begin{lemma}
	\label{L:prod}
	Let $\pi_i\colon \mathcal C_i\to S$ be families of connected nodal curves,
	with sections $p_i\colon S\to\mathcal C_i$ through the relative smooth loci, and
	let
	\[
	\mathcal C=\mathcal C_1\cup_{p_1\sim p_2}\mathcal C_2
	\]
	be the family obtained by gluing the two sections. Let $\sigma$ be a general
	stability condition on $\mathcal C/S$, and let $\sigma_i$ be the induced
	stability condition on $\mathcal C_i/S$.
	Then restriction induces an isomorphism of relative fine compactified
	Jacobian spaces over $S$:
	\[
	\rho\colon \ov J_{\C/S}(\sigma)
	\xrightarrow{\sim}
	\ov J_{\C_1/S}(\sigma_1)
	\times_S
	\ov J_{\C_2/S}(\sigma_2).
	\]
\end{lemma}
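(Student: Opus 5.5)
We construct the restriction morphism $\rho$ first and then produce an inverse, working on $S$-points (that is, after an arbitrary base change $T\to S$), so that it suffices to compare moduli functors. Let $\mathcal F$ be a $T$-flat family of simple torsion-free rank $1$ sheaves on $\C_T$ that is $\sigma$-stable on every geometric fibre. Write $i_j\colon\C_{j}\hookrightarrow\C$ for the two subcurves and set $\mathcal F_j:=i_j^*\mathcal F/\mathrm{torsion}$, the maximal torsion-free quotient of the restriction. The node $p_1\sim p_2$ is a separating node. The first key step is therefore a fibrewise one: at a separating node, a simple torsion-free rank $1$ sheaf must be locally free. Indeed, if it failed to be free there, it would be a pushforward from the partial normalisation at that node, which is disconnected, so it would split as $\mathcal F_1\oplus\mathcal F_2$ and fail to be simple. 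Consequently $\mathcal F$ is invertible along the gluing section, $i_j^*\mathcal F$ has no torsion supported there, and the $\mathcal F_j$ are $T$-flat of characteristics $\chi_j$ with $\chi_1+\chi_2=\chi+1$. Moreover $\mathcal F_j$ is simple on each fibre, since simplicity of a torsion-free rank $1$ sheaf is equivalent to connectedness of the complement of its non-free locus, and this condition passes to the connected subcurves.

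The second step is to check that $\sigma$-stability of $\mathcal F$ is equivalent to the pair of conditions $\sigma_j$-stability of $\mathcal F_j$, where $\sigma_j$ is the induced condition. This should be essentially the definition of the induced stability condition on a curve with a separating node: the inequalities for subcurves $Y\subseteq X$ split according to $Y\cap X_1$ and $Y\cap X_2$. The choice of the characteristics $\chi_j$ is also forced by stability: generality of $\sigma$ means the inequality on the subcurve $X_1$ is strict, so exactly one value of $\chi_1$ is allowed. In the language of PT-assignments, this step amounts to the product decomposition $\P_{\sigma,\Gamma}=\P_{\sigma_1,\Gamma_1}\times\P_{\sigma_2,\Gamma_2}$, since the connected spanning subgraphs of $\Gamma$ all contain the bridge. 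Together with the first step, this shows that $\rho$ is a well-defined morphism of rigidified stacks. It is compatible with the $\mathbb G_m$-rigidification because scalar automorphisms restrict to scalars.

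For the inverse, start from a pair $(\mathcal F_1,\mathcal F_2)$. Over the base, choose an identification $\mathcal F_1|_{p_1}\cong\mathcal F_2|_{p_2}$ of the two line bundles on $T$, and glue to obtain $\mathcal F$. This identification is well defined only up to $\mathbb G_m$, and that scalar is absorbed by the rigidification. On a Zariski cover one can choose trivialisations, and different choices give isomorphic glued sheaves via the automorphism acting by scalars on one of the two components. I expect this step to be the main obstacle: one must check carefully that the gluing descends to a morphism from the product of the rigidified stacks, handling the fact that each factor is only a $\mathbb G_m$-gerbe quotient. The approach I would try is to work with the non-rigidified stacks and show that $\Simp_{\C}\to\Simp_{\C_1}\times_S\Simp_{\C_2}$ is a $\mathbb G_m$-torsor, namely the torsor of gluing data. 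Rigidifying then turns this map into an isomorphism.

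Finally, the two constructions are mutually inverse: restricting a glued sheaf recovers $\mathcal F_j$, and any $\mathcal F$ is recovered from its restrictions by the normalisation sequence at the separating node, using that $\mathcal F$ is locally free there. Since both sides are proper over $S$, an alternative shortcut is to check that $\rho$ is bijective on geometric points and an isomorphism on tangent spaces. Tangent spaces can be compared via $\mathrm{Ext}^1(\mathcal F,\mathcal F)$ and the local-to-global sequence, with the separating node contributing no local term.
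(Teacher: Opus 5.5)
Your proposal is correct and follows essentially the same route as the paper. Simplicity forces local freeness along the separating node, so restriction is well defined; stability splits because, up to complement, every biconnected subcurve is $\mathcal{C}_i$ or biconnected in $\mathcal{C}_i$; and the inverse is gluing, with the $\mathbb{G}_m$-ambiguity absorbed by the rigidification. The paper handles that ambiguity by twisting with $\pi_i^*(p_i^*I_i)^{-1}$, whereas you use the torsor of gluing data, and you give more detail on flatness and simplicity of the pieces.

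Your optional tangent-space shortcut is incomplete as stated. A proper bijection that is an isomorphism on tangent spaces need not be an isomorphism without extra input, such as flatness over $S$ and reducedness of the fibres. Your main argument does not depend on it.
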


\begin{proof}
	Since $\sigma$ is general, any $I\in\ov J_{\C/S}(\sigma)$ is simple, and hence locally free along the node section. Therefore, $I_{|_{\C_i}}$ is a torsion free rank one sheaf on $\C_i/S$. Moreover, by \cite[Lemma~5.3(ii)]{FPV1}, $I_{|_{\C_i}}$ is $\sigma_i$-stable.
	
	Vice-versa, let $I_i\in\ov J_{\C_i/S}(\sigma_i)$. Up to twisting by a line bundle on $S$, we may identify $I_i$ with $I_i\otimes\pi_i^*(p_i^*I_i)^{-1}$ and assume that $I_1$ and $I_2$ glue uniquely (in the rigidification) to a sheaf $I$ on $\C$. Finally,  up to passing to the complementary subcurve, any $Y\in\BCon(C)$ is such that $Y=\C_i$ or $Y\in\BCon(\C_i)$. In either case, since $\sigma_i$ is induced by $\sigma$, by \cite[Definition~5.1]{FPV1}, we conclude that $I\in\ov J_{\C/S}(\sigma)$.
\end{proof}

\begin{remark}
	\label{R:prod-equivariant}
	Let $G= G_1\cup \{e\}\cup G_2\leq\Gamma_1\cup\{e\}\cup\Gamma_2=\Gamma$, where $G_i:=\Gamma_i\cap G$, be a connected spanning subgraph.
	Applying the lemma to a stratum $S=\wt\M_\Gamma$, and taking the equivariant Hodge--Euler characteristic, we obtain
	$$
	\e_{HE}^{\operatorname{Stab}(G,e)}\left(J_{\Gamma,E(G)}\right)
	= \e_{HE}^{\operatorname{Stab}(G,e)}\left(J_{\Gamma_1,E(G_1)}
	\times_{\wt\M_{\Gamma}}J_{\Gamma_2,E(G_2)}\right),
	$$
	where $\operatorname{Stab}(G,e)$ denotes the subgroup of the stabiliser of $G$ that preserves the edge $e$.
	
	In particular, if $e$ is fixed by the automorphism group of $\Gamma$, we have 
	$$
	\e_{HE}(\ov J_\Gamma)=\e_{HE}^{\Aut(\Gamma)}\left(\wt J_{\Gamma_1}\times_{\wt\M_\Gamma}\wt J_{\Gamma_2}\right)^{\Aut(\Gamma)}.
	$$
\end{remark}

Now, we are ready to prove the main propositions. The equivariant Hodge--Euler characteristics of the $\M_{g,n}$ involved in the computations are computed in Jonas Bergstr\"om's repository \url{https://github.com/jonasbergstroem/Cohomology-of-moduli-spaces-of-curves}.
\begin{proposition}\label{P:eJ2}
	The Hodge--Euler characteristic of $\ov J_{2}$ is 
	$$
	\e_{HE}(\ov J_2)=L^5+3L^4+6L^3+6L^2+3L+1.
	$$
\end{proposition}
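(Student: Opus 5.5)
We will compute $\e_{HE}(\ov J_2)$ stratum by stratum, using the decomposition
$$\e_{HE}(\ov J_2)=\sum_{\Gamma\in G_{2,0}}\bigl(\e_{HE}^{\Aut(\Gamma)}(\wt J_\Gamma)\bigr)^{\Aut(\Gamma)}$$
over the seven stable graphs of genus $2$ without legs. These are: the smooth vertex of genus $2$; the genus-$1$ vertex with a loop; two genus-$1$ vertices joined by an edge; the genus-$0$ vertex with two loops; a genus-$0$ vertex with a loop joined to a genus-$1$ vertex; two genus-$0$ vertices each with a loop, joined by an edge; and the theta graph (two genus-$0$ vertices joined by three edges).

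For each $\Gamma$ we list the connected spanning subgraphs $G\leq\Gamma$. By the bijection $\P_{\sigma,\Gamma}(G)\leftrightarrow\Pic(G)$, the number of toroidal strata indexed by $G$ equals the complexity of $G$. This gives one stratum for each loop configuration, and for the theta graph three strata for the whole graph, three for each $2$-edge subgraph and one for each spanning tree. For each stratum $J^{\un d}_{\Gamma,E(G)}$ we use that it is a principal $(\CC^*)^{b_1(G)}$-bundle over the relative Jacobian of the normalisation $\wt X$. The latter is a product over vertices of universal Jacobians $J_{\M_{g(v),n(v)}}$ of the smooth vertex curves.

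For vertices of genus $0$ this contributes only the point $\M_{0,n}$. For genus $1$ the universal Jacobian over $\M_{1,n}$ is isomorphic to $\M_{1,n+1}$-type data: $J_{\M_{1,1}}\cong \M_{1,2}\sqcup\M_{1,1}$ (as $\C_{1,1}$), and similarly over $\M_{1,2}$. For genus $2$ we use the Leray spectral sequence of $J_{\M_2}\to\M_2$. The local systems are $\wedge^q\VV$, with $\VV$ the standard symplectic local system. The needed values $\e_c(\M_2,\wedge^q\VV)$ come from Bergström's data ($\wedge^2\VV=\VV_{1,1}\oplus\QQ(-1)$, and odd weights vanish on the hyperelliptic locus). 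Alternatively, since $\e_{HE}$ is independent of $\chi$, we can identify $J^{1}_{\M_2}$ with $\C_2$ by Abel--Jacobi only after adding fibrewise; more safely, we compute $\e(J^0_{\M_2})=\sum_q(-1)^q\e_c(\M_2,\wedge^q\VV)$ directly. The torus factors contribute $(L-1)^{b_1(G)}$ twisted by the action of $\Aut(\Gamma)$ on $H_1(G,\ZZ)$. To handle this we write $\e^{H}_{HE}((\CC^*)^{b})=\sum_k(-1)^{b-k}[\wedge^{k}H_1]L^{k}$ with the sign representation where appropriate.

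For the separating-edge graphs we apply Remark~\ref{R:prod-equivariant}. This reduces $\ov J_\Gamma$ to $\Aut(\Gamma)$-invariants of fibre products of the genus-$1$ pieces. For instance, for two genus-$1$ vertices the answer is $\Sym^2$ of $\e(\ov J_{\M_{1,1}})=\e(\ov\M_{1,2}\text{-open part})$, with the $\SSS_2$-swap.

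Summing all seven invariant contributions should give $L^5+3L^4+6L^3+6L^2+3L+1$. As sanity checks, the result should be palindromic, should reduce correctly to the point count of the fibres, and should have Euler characteristic $20$.

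The main obstacle is the equivariant bookkeeping. This covers the $\Aut(\Gamma)$-action on the toroidal strata, especially for the theta graph ($\Aut=\SSS_3\times\SSS_2$) and the two-loop graph. There the automorphisms both permute the strata indexed by the $3$ multidegrees and act on the tori by signs (loop reversal inverts the $\CC^*$-coordinate), and on $H^*(\M_{0,4})$ or $H^*(\M_{0,3})$. We would handle this by computing characters of $\Aut(\Gamma)$ on each orbit of strata, using induced representations from stabilisers $\operatorname{Stab}(G,\un d)$, and only then taking invariants. The other delicate input is the genus-$2$ open part $\e_{HE}(J_{\M_2})$, which we take from the known local-system Euler characteristics on $\M_2$.
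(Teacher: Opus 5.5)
Your plan of computing all seven strata in full and summing is a genuinely different route from the paper's. It goes through once two slips, described below, are corrected.

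\textbf{Comparison with the paper.} The paper never computes the low-weight part. Because $\ov J_2$ is a smooth proper DM stack, its cohomology is pure and satisfies Poincar\'e duality, so only the part of $\e_{HE}$ of weight at least $5$ is needed. That requires full computations only over three strata:
\begin{enumerate}
\item $\M_2$, via the Abel--Jacobi map $\C_2^2/\SSS_2\to J^2_2$, which contracts a $\PP^1$-bundle over the canonical point, together with the known $\e_{HE}^{\SSS_2}(\M_{2,2})$ and $\e_{HE}(\M_{2,1})$;
\item $\M_{1,2}/\SSS_2$;
\item $(\M_{1,1}\times\M_{1,1})/\SSS_2$.
\end{enumerate}
For the two codimension-$2$ strata the paper uses only the number of top-dimensional components of $\ov J_\Gamma$, which is one each. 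The dumbbell and theta strata, where $\dim\ov J_\Gamma=2$, are ignored. So the $\SSS_3\times\SSS_2$ and $\SSS_2\wr\SSS_2$ bookkeeping you flag as the main obstacle is exactly what the paper sidesteps.

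In exchange, your argument uses only additivity, not purity, and it turns palindromicity into a genuine check. It is essentially the verification the paper puts in its ancillary file. Your local-system treatment of $\M_2$ is also fine (the paper's final remark mentions this alternative), provided you input $\e_c(\M_2,\VV_{1,1})=-1$; this gives $L^3(1+L+L^2)-1$.

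Carried out correctly, the invariant contributions of $\M_2$, $\M_{1,2}/\SSS_2$, $(\M_{1,1}\times\M_{1,1})/\SSS_2$, $(\M_{1,1}\times\M_{0,3})/\SSS_2$, $\M_{0,4}/\SSS_2\wr\SSS_2$, the dumbbell and the theta graph are, in that order,
\[
L^5+L^4+L^3-1,\quad L^4+2L^3+1,\quad L^4+L^3+L^2,\quad L^3+2L^2+L,\quad L^3-1,\quad L^2+L+1,\quad 2L^2+L+1.
\]
These sum to $L^5+3L^4+6L^3+6L^2+3L+1$.

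\textbf{The two slips.} Each would spoil that sum if used as written.

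First, a $2$-edge subgraph of the theta graph is a $2$-cycle of complexity $2$. So it indexes two toroidal strata, exchanged by the vertex swap, not three; your own complexity rule says so. With three, the fibre over the theta curve would have (non-equivariant) class $3L^2+3L-3$ instead of $3L^2$.

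Second, your torus formula is off by the character $\det H_1$. Since $H^{2b-k}_c((\CC^*)^b)\cong\wedge^kH_1(G,\QQ)\otimes\QQ(k-b)$, the correct expression is $\e^H_{HE}((\CC^*)^b)=\sum_k(-1)^k[\wedge^kH_1]L^{b-k}$.
\begin{itemize}
\item For a single reversed loop, your version gives $L\s_{1^2}-\s_2$, whose invariant part is $-1$ instead of $L$.
\item For the top strata over the theta curve, it gives invariant part $1$ instead of $L^2$.
\end{itemize}
So ``the sign representation where appropriate'' must mean tensoring with $\det H_1$ in every case.

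Finally, drop the aside about identifying $J^1$ with $\C_2$. In genus $2$ the Abel--Jacobi map embeds $C$ as a divisor in $J^1_C$; the paper instead works with $\C_2^2/\SSS_2\to J^2_2$.
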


\begin{proof}
	Since $\ov J_2$ is orbifold-smooth and proper, its cohomology is pure and satisfies Poincar\'e duality. Thus, it is enough to compute the components of $\e_{HE}(\ov J_2)$ in weights at least $\dim_{\CC}(\ov J_2)$.
	We do so by studying the equivariant Hodge--Euler characteristic of 
	$$
	\ov J_\Gamma\longrightarrow\M_\Gamma
	$$
	for the following strata.
	\begin{itemize}
		\item \textbf{Stratum $\M_2$.} Let $C$ be a generic curve of genus $2$, and let 
		$$
		AJ\colon\Sym^2(C)\rightarrow \Pic^2(C)
		$$
		be the Abel--Jacobi map, whose fibres are of the form $\PP H^0(C,\O_C(p+q))$, for $p,q\in C$. By Riemann--Roch, we have that $h^0(C,\O_C(p+q))=2$ if $\O_C(p+q)$ is the canonical sheaf, and it equals $1$ otherwise. Now, if $\C_2\cong\M_{2,1}$ is the universal curve, and $J^2_2$ the universal Jacobian in degree $2$, the following diagram commutes:
		\begin{equation}\label{E:AJmapM2}
			\begin{tikzcd}
				\C_2^2/\SSS_2 \arrow{r}{AJ} \arrow[dr] & J^2_{2} \arrow [d]\\ 
				& \M_{2}
			\end{tikzcd}
		\end{equation}
		By the preceding description of the Abel--Jacobi map, we have the decomposition $\C_2^2=U\sqcup Z$, where $Z$ is the preimage of the canonical divisor with respect to the Abel--Jacobi map, and $U$ is its complement. Since $AJ(Z)$ is a $\M_2$-point in $J^2_2$, we deduce that
		$$
		\e_{HE}(J_2)=\e_{HE}(J^2_2)=\e_{HE}(\C_2^2)-\e_{HE}(Z)+\e_{HE}(\M_2),
		$$
		where $J_2=J^0_2$.
		Since $\Sym^2\C_2=\M_{2,2}/\SSS_2\sqcup\M_{2,1}$ and since $Z\to\M_2$ is a $\PP^1$-bundle, we obtain
		\begin{align*}
			\e_{HE}(J_2)&=\e_{HE}^{\SSS_2}(\M_{2,2})^{\SSS_2}+\e_{HE}(\M_{2,1})-L\e_{HE}(\M_2)=\\&=((L^5+L^4-1)\s_2+L^4\s_{1^2})^{\SSS_2}+(L^4+L^3)-L^4=L^5+L^4+L^3-1.
		\end{align*}

		\item\textbf{Stratum $\M_{1,2}/\SSS_2$.} In genus $1$, the Abel--Jacobi map 
		\begin{equation}\label{E:AJmapM13}
		\begin{tikzcd}
			\C_{1,2} \arrow[r,"AJ"] \arrow[dr]&J^1_{1,2} \arrow[d] \\
			&\M_{1,2}
		\end{tikzcd}
		\end{equation}
		is an isomorphism. After translating, we can identify $J^1_{1,2}$ with $J^{-1}_{1,2}$, which is contained in the compactified Jacobian $\ov J_2$. The universal curve stratifies as 
		$$
		\C_{1,2}=\M_{1,3}\sqcup\M_{1,2}\sqcup\M_{1,2},
		$$
		where the two copies of $\M_{1,2}$ are obtained by the sections corresponding to the marked points.
		Thus, we have, as $\SSS_2$-representation,
		\begin{align*}
			\e_{HE}^{\SSS_2}(\C_{1,2})&=\Res^{\SSS_3}_{\SSS_2}\e_{HE}^{\SSS_3}(\M_{1,3})+\e_{HE}(\M_{1,2})(\s_2+\s_{1^2})=\\
			&=(L^3\s_2-\s_{1^2})+L^2(\s_2+\s_{1^2}).
		\end{align*}
		First, we consider the codimension $0$ stratum
		\begin{equation}\label{E:J12}
			1\longrightarrow\CC^*\longrightarrow J_{\Gamma,E(\Gamma)}\longrightarrow J_{1,2}\longrightarrow1
		\end{equation}
		of the compactified Jacobian. Its $\CC^*$ component has cohomology with compact support
		$$
		H^i_c(\CC^*,\QQ)=\begin{cases}
			\QQ\s_{1^2},\textup{ if }i=1,\\
			\QQ(-1)\s_2.\textup{ if }i=2,\\
			0,\textup{ otherwise},
		\end{cases}
		$$
		and is invariant with respect to the action by translation induced by $\pi_1(\M_{1,2})$ on $\CC^*$, since a connected group acts trivially on its cohomology. Thus, Leray-Hirsch applies to the $\SSS_2$-equivariant Leray spectral sequence associated to \eqref{E:J12}, and we can compute the Hodge--Euler characteristic of the generalised Jacobian as
		\begin{align*}
			\e_{HE}^{\SSS_2}(J_{\Gamma,E(\Gamma)})^{\SSS_2}&=(\e_{HE}(\C_{1,2})\e_{HE}(\CC^*))^{\SSS_2}=\\
			&=(((L^3+L^2)\s_2+(L^2-1)\s_{1^2})(L\s_2-\s_{1^2}))^{\SSS_2}=L^4+L^3-L^2+1.
		\end{align*}
		The same argument is used in \cite[Theorem~1.2]{PPSW}.
		
		Similarly, the codimension $1$ stratum has Hodge--Euler characteristic
		$$
		\e_{HE}^{\SSS_2}(J_{\Gamma,\emptyset})^{\SSS_2}=\e_{HE}^{\SSS_2}(\C_{1,2})^{\SSS_2}=L^3+L^2.
		$$
		Hence, adding both contributions we obtain
		$$
		\e_{HE}(\ov J_{\Gamma})=L^4+2L^3+1.
		$$
		\item \textbf{Stratum $(\M_{1,1}\times\M_{1,1})/\SSS_2$.} Let $E$ denote an elliptic curve, then its rational cohomology (with compact support) is given by 
		\begin{equation*}
			H^i_c(E,\QQ)=\begin{cases*}
				\QQ,\quad \textup{for }i=0,\\
				\VV_1,\quad \textup{for }i=1,\\
				\QQ(-1),\quad \textup{for }i=2,\\
				0,\quad \textup{otherwise},
			\end{cases*}
		\end{equation*}
		where $\VV_1$ denotes the standard representation of the symplectic group $\textup{Sp}_2(\ZZ)$, and the numbers in parentheses denote the Tate twists. 
		
		The fibre over a point $[X]=[E\cup E']$ of this stratum is the product $ J_E\times J_{E'}$ of the Jacobians of two elliptic curves joined in a node. In particular, $\ov J_X= J_X$, as $X$ is of compact type. Since the Abel--Jacobi map $E\to  J_E$ is an isomorphism for any elliptic curve, the cohomology of the fibre is given by
		\begin{equation*}
			H^i_c( J_E\times  J_{E'},\QQ)=\begin{cases*}
				\QQ,\quad \textup{for }i=0,\\
				\VV_1\oplus\VV_1,\quad \textup{for }i=1,\\
				\QQ(-1)\otimes\QQ\oplus\QQ\otimes\QQ(-1)\oplus\VV_1\otimes\VV_1,\quad \textup{for }i=2,\\
				\VV_1(-1)\oplus\VV_1(-1),\quad \textup{for }i=3,\\
				\QQ(-2),\quad \textup{for }i=4,\\
				0,\quad \textup{otherwise},
			\end{cases*}
		\end{equation*}
		We observe that $H_c^\bullet(\M_{1,1},\VV_1)=0$ (see, for example \cite{PetersenGenus1}), and $H_c^\bullet(\M_{1,1},\QQ)$ equals $\QQ(-1)$ in degree $2$ and vanishes otherwise. Therefore, the entries $H^p_c(\M_{1,1}\times\M_{1,1},H^q_c( J_X,\QQ))^{\SSS_2}$ of the Leray spectral sequence associated to the fibration $ J_X\to(\M_{1,1}\times\M_{1,1})/\SSS_2$ are non-zero only for $p=4$. In particular,
		\begin{equation*}
			H^4_c(\M_{1,1}\times\M_{1,1},H^q_c( J_X,\QQ))^{\SSS_2}=\begin{cases*}
				\QQ(-2),\quad \textup{for }q=0,\\
				\QQ(-3),\quad \textup{for }q=2,\\
				\QQ(-4),\quad \textup{for }q=4,\\
				0,\quad\textup{otherwise},
			\end{cases*}
		\end{equation*}
		and we have
		\begin{equation*}
			\e_{HE}(\ov J_\Gamma)=L^4+L^3+L^2.
		\end{equation*} 
		\item \textbf{Codimension $2$ strata.} The two strata of codimension $2$ are indexed by the following graphs:
		\begin{center}\scalebox{.8}{
			\begin{tikzpicture}[remember picture, inner/.style={draw,circle,thick,node distance={15mm}},
				outer/.style={node distance={40mm}}]
				\node[outer] (A1) {
					\begin{tikzpicture}
						\node[inner] (1) {$1$};
						\node[inner, right of=1] (2) {$0$};
						\draw (1) to  (2);
						\draw (2) to [out=45, in=315, looseness=5] (2);
					\end{tikzpicture}
				};
				\node[outer, right of =A1] (A2) {
					\begin{tikzpicture}
						\node[inner] (1) {$0$};
						\draw (1) to [out=45, in=315, looseness=5] (1);
						\draw (1) to [out=135, in=225, looseness=5] (1);
					\end{tikzpicture}
				};
			\end{tikzpicture}}.
		\end{center}
		We observe that neither stratum can contribute a weight $5$ term. Indeed, for the graph on the left we deduce this from Lemma~\ref{L:prod}, while for the graph on the right we can argue as in Stratum $\M_{0,5}/\SSS_2\wr\SSS_2$.
		Therefore, each of these strata $\M_\Gamma$ contributes for a term $L^3$ for each irreducible component of $\ov J_\Gamma$ up to the action of $\Aut(\Gamma)$. Since the number of irreducible components of $\ov J_\Gamma$ equals the complexity of $\Gamma$, we have a contribution of $2L^3$.
	\end{itemize}
	Adding up all the invariant Hodge--Euler characteristics we obtain 
	$$\e_{HE}(\ov J_2)=L^5+3L^4+6L^3+\textup{ lower degree terms,}$$
	hence the claim follows by Poincar\'e duality. This is independently verified with the complete computations over the remaining strata in the ancillary file.
\end{proof}

\begin{proposition}\label{P:eJ21}
	The Hodge--Euler characteristic of $\ov J_{2,1}$ is 
	$$
	\e_{HE}(\ov J_{2,1})=L^6+5L^5+15L^4+20L^3+15L^2+5L+1.
	$$
\end{proposition}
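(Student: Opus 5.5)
I will follow the same strategy as in the proof of Proposition~\ref{P:eJ2}. Since $\ov J_{2,1}$ is a smooth proper DM-stack of dimension $6$, its cohomology is pure and satisfies Poincar\'e duality. It therefore suffices to determine the coefficients of $L^6,L^5,L^4$, and to check the middle coefficient of $L^3$; the lower coefficients are then forced by the symmetry $L^k\leftrightarrow L^{6-k}$. I will write
\[
\e_{HE}(\ov J_{2,1})=\sum_{\Gamma\in G_{2,1}}\bigl(\e_{HE}^{\Aut(\Gamma)}(\wt J_\Gamma)\bigr)^{\Aut(\Gamma)}.
\]
For each stable graph $\Gamma$ of type $(2,1)$, I will compute the contribution of the top weights, using the toroidal stratification \eqref{E:ToroidalStratification} by pairs $(G,\un d)$ together with Leray spectral sequences. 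For strata of codimension $c$ in $\ov\M_{2,1}$, the top weight available is $L^{6-c}$ times a count of components. So only strata of codimension $\le 3$ need real analysis, and the deeper ones only need their number of $\Aut(\Gamma)$-orbits of irreducible components, i.e. of toroidal strata $(\Gamma,\un d)$ with $G=\Gamma$.

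\textbf{Open stratum $\M_{2,1}$.} I will use the Abel--Jacobi map $\C_{2,1}^2/\SSS_2\to J^2_{2,1}$ pulled back from \eqref{E:AJmapM2}. It is an isomorphism away from the canonical locus, which is a $\PP^1$-bundle over $\M_{2,1}$. This gives
\[
\e_{HE}(J_{2,1})=\e_{HE}^{\SSS_2}(\C_{2,1}^2)^{\SSS_2}-L\,\e_{HE}(\M_{2,1}).
\]
Here I stratify the fibre square of the universal curve over $\M_{2,1}$ by the coincidences of the two points with each other and with the marked point. This writes $\e_{HE}^{\SSS_2}(\C_{2,1}^2)$ in terms of $\Res^{\SSS_3}_{\SSS_2}\e_{HE}^{\SSS_3}(\M_{2,3})$, $\e_{HE}(\M_{2,2})$ and $\e_{HE}(\M_{2,1})$, all read off from Bergstr\"om's data.

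\textbf{Boundary strata of codimension $1$.} There are three divisors. The first is $\M_{1,3}/\SSS_2$ (nonseparating node), handled exactly as the $\M_{1,2}/\SSS_2$ case: $J^{\pm1}_{1,3}\cong\C_{1,3}$ via Abel--Jacobi, the codimension-$0$ stratum is a $\CC^*$-torsor handled by Leray--Hirsch with the $\s_{1^2}$/$\s_2$ twist, and the codimension-$1$ toroidal stratum is a copy of $\C_{1,3}$. The other two are $\M_{1,1}\times\M_{1,2}$ and $\M_{2,1}\times\M_{0,3}$ (separating nodes). For these, Remark~\ref{R:prod-equivariant} reduces $\ov J_\Gamma$ to the fibred product of the relative Jacobians of the two sides. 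The genus-$1$ pieces are elliptic curves over their base, and I use the vanishing of $H^\bullet_c(\M_{1,1},\VV_1)$ and of the analogous odd-weight pieces over $\M_{1,2}$ to kill all but the Tate parts. The $\M_{2,1}\times\M_{0,3}$ piece simply reproduces $J_{2,1}$ over a point for the rational tail, i.e. it gives $\e_{HE}(J_2$ pulled back to $\M_{2,1})$, computed as above.

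\textbf{Codimension $2$ and $3$ strata.} I enumerate the graphs of $G_{2,1}$: there are roughly five of codimension $2$ and about seven of codimension $3$. For codimension $2$, I need the $L^4$ coefficient, via the weight-$4$ parts of the relative Jacobian over each stratum. I compute these with the same product lemma and $\CC^*$-torsor arguments, using that $\M_{0,n}$ and $\M_{1,n}$ for small $n$ have pure Tate or vanishing twisted cohomology. For codimension $3$, I only count orbits of top-dimensional components: the complexity of $\Gamma$ modulo $\Aut(\Gamma)$.

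\textbf{Main obstacle.} I expect the main difficulty to be the bookkeeping of the $\Aut(\Gamma)$- and $\SSS_2$-actions on the $H^1(G,\CC^*)$ tori for graphs with two loops or a double edge. There, automorphisms may act on the torus cohomology by sign characters and swap multidegrees in $\P_{\sigma,\Gamma}(\Gamma)$. I would treat these by explicitly listing the $\Pic(\Gamma)$-classes and their stabilisers. Finally, I will sum the contributions, check the coefficients $1,5,15$ and the middle $20$, and conclude by Poincar\'e duality, as in Proposition~\ref{P:eJ2}.
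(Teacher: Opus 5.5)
Your outline follows the paper's strategy: purity and Poincar\'e duality, the double stratification by stable graphs and toroidal strata, Abel--Jacobi for the open part, Leray--Hirsch for the $\CC^*$-torsors, the product lemma at separating nodes, and a component count in codimension $3$. However, two concrete steps would make the final sum wrong.

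\textbf{The boundary divisors are miscounted.} $\ov\M_{2,1}$ has only two boundary divisors, $\M_{1,3}/\SSS_2$ and $\M_{1,2}\times\M_{1,1}$. There is no stratum $\M_{2,1}\times\M_{0,3}$. A rational tail carrying only the node and the single marking has two special points and is unstable. Such a locus would also have dimension $4$, not $3$. You identify its would-be contribution with $\e_{HE}(J_{2,1})$. Adding it counts the open stratum twice and already makes the coefficient of $L^6$ equal to $2$.

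\textbf{The claimed vanishing over $\M_{1,2}$ is false.} $H^\bullet_c(\M_{1,2},\VV_1)$ does not vanish. Along $\M_{1,2}\to\M_{1,1}$, with fibre $E\setminus\{0\}$, the Leray spectral sequence has a single nonzero row. It gives $H^{p+1}_c(\M_{1,2},\VV_1)\cong H^p_c(\M_{1,1},\VV_1\otimes\VV_1)$. We have $\VV_1\otimes\VV_1=\VV_2\oplus\QQ(-1)$, $H^1_c(\M_{1,1},\VV_2)=\QQ$ (the Eisenstein class) and $H^2_c(\M_{1,1},\QQ(-1))=\QQ(-2)$. Hence the compactly supported Euler characteristic of $\VV_1$ on $\M_{1,2}$ is $1-L^2$.

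These classes are Tate, but they must be kept. They are exactly what turns the naive $(1+L)L^2$ into the correct $\e_{HE}(J_{1,2})=L^3+2L^2-1$. With your vanishing, two strata come out wrong:
\begin{itemize}
\item $\M_{1,2}\times\M_{1,1}$ would contribute $L^5+2L^4+L^3$ instead of $L^5+3L^4+2L^3-L^2-L$;
\item $\M_{1,2}\times(\M_{0,3}/\SSS_2)$ would lose one in its $L^3$ coefficient.
\end{itemize}
You would therefore find $14L^4$ and $18L^3$ instead of $15L^4$ and $20L^3$. The fix is what you already do for $\M_{1,3}/\SSS_2$, and what the paper does throughout: identify $J_{1,n}\cong\C_{1,n}$ by Abel--Jacobi, and stratify $\C_{1,2}=\M_{1,3}\sqcup\M_{1,2}\sqcup\M_{1,2}$.

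\textbf{Smaller points.} The middle coefficient is not determined by duality. So for the five codimension-$2$ strata you need the $L^3$ coefficients as well as the $L^4$ ones; they total $8$. There are exactly five codimension-$3$ graphs, not about seven, and their complexities sum to $9$.
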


\begin{proof}
As in Proposition~\ref{P:eJ2}, it is enough to study the equivariant Hodge--Euler characteristic of a subset of the strata
	$$
	\ov J_\Gamma\longrightarrow\M_\Gamma.
	$$
\begin{itemize}
\item \textbf{Stratum $\M_{2,1}$.} 
Consider the commutative diagram
\begin{equation}\label{E:AJmapM21}
\begin{tikzcd}
	\C^3/\SSS_2 \arrow[r] \arrow[dr] & J^2_{2,1} \arrow [d]\\ 
	 & \M_{2,1}
\end{tikzcd}
\end{equation}
obtained by pulling back Diagram~\eqref{E:AJmapM2} along the forgetful morphism $\pi: \M_{2,1}\to\M_2$. Here the power of the universal curve is taken as fibred product over $\M_{2}$. As in the case of the Stratum $\M_2$, we have the decomposition 
$$
\C^3/\SSS_2=U\sqcup Z
$$
where $Z$ denotes the preimage of the canonical divisor via the Abel--Jacobi map, and $U$ its complement. Since $Z\to \M_{2,1}$ is a $\PP^1$ bundle, its Hodge--Euler characteristic is given by 
$$
\e_{HE}(Z)=(L+1)\e_{HE}(\M_{2,1}),
$$
hence, we have
$$
\e_{HE}(J_{2,1})=\e_{HE}(J^2_{2,1})=\e_{HE}(\C^3/\SSS_2)-L\e_{HE}(\M_{2,1}).
$$
Furthermore, we observe that the universal curve is stratified by 
$$
\C^3=\M_{2,3}\sqcup (\bigsqcup_{\{i,j\}\subset[3]}\M_{2,\{i,j\}})\sqcup \M_{2,1}.
$$
Thus, observing that the $\SSS_2$-action is trivial on one copy of $\M_{2,2}$, while it swaps the other two, we obtain
\begin{align*}
	\e_{HE}(\C^3/\SSS_2)&=\e_{HE}^{\SSS_2}(\C^3)^{\SSS_2}=(\Res^{\SSS_3}_{\SSS_2}\e_{HE}^{\SSS_3}(\M_{2,3})+\e_{HE}(\M_{2,2})(\s_2+\s_{1^2}))^{\SSS_2}+\e_{HE}(\M_{2,2})+\e_{HE}(\M_{2,1})=\\
	&=((L^6+L^5-L^4-L)\s_2+(L^5-L)(\s_{2}+\s_{1^2})+((L^5+L^4-1)\s_2+L^4\s_{1^2})(\s_2+\s_{1^2}))^{\SSS_2}+\\&+(L^5+2L^4-1)+(L^4+L^3)=\\
	&=L^6+4L^5+4L^4+L^3-2L-2,
\end{align*}
and, consequently,
$$
\e_{HE}(J_{2,1}\to\M_{2,1})=L^6+3L^5+3L^4+L^3-2L-2.
$$

\item \textbf{Stratum $\M_{1,3}/\SSS_2$.} In genus $1$, the Abel--Jacobi map 
\[
\begin{tikzcd}
	\C_{1,3} \arrow[r,"AJ"] \arrow[dr]&J^1_{1,3} \arrow[d] \\
	&\M_{1,3}
\end{tikzcd}
\]
is an isomorphism. After translating, we can identify $J^1_{1,3}$ with $J^{-1}_{1,3}$, which is contained in the compactified Jacobian $\ov J_{2,1}$. The universal curve stratifies as 
$$
\C_{1,3}=\M_{1,4}\sqcup\M_{1,3}\sqcup\M_{1,3}\sqcup\M_{1,3},
$$
where the three copies of $\M_{1,3}$ are obtained by the sections corresponding to the marked points.
Thus, we have as an $\SSS_3$-representation
\begin{align*}
	\e_{HE}^{\SSS_3}(\C_{1,3})&=\Res^{\SSS_4}_{\SSS_3}\e_{HE}^{\SSS_4}(\M_{1,4})+\e_{HE}^{\SSS_3}(\M_{1,3})(\s_3+\s_{21})=\\
	&=((L^4-L^2)\s_3-L(\s_{21}+\s_{1^3})+\s_{21}+\s_3)+(L^3\s_3-s_{1^3})(\s_3+\s_{21}),
\end{align*}
with restriction as an $\SSS_2$-representation
$$
\e_{HE}^{\SSS_2}(\C_{1,3})=(L^4+2L^3-L^2-L+1)\s_2+(L^3-2L-1)\s_{1^2}.
$$

Arguing as in Stratum~$\M_{1,2}/\SSS_2$, we see that the Hodge--Euler characteristic of the generalised Jacobian in the codimension $0$ stratum is obtained by taking the $\SSS_2$-invariants after multiplying $\e^{\SSS_2}_{HE}(\C_{1,3})$ with $\e^{\SSS_2}_{HE}(\CC^*)$. Namely
\begin{align*}
	\e_{HE}^{\SSS_2}(J_{\Gamma,E(\Gamma)})^{\SSS_2}&=(\e_{HE}^{\SSS_2}(\C_{1,3})\e_{HE}^{\SSS_2}(\CC^*))^{\SSS_2}=\\&=(((L^4+2L^3-L^2-L+1)\s_2+(L^3-2L-1)\s_{1^2})(L\s_2-\s_{1^2}))^{\SSS_2}=\\&=L^5+2L^4-2L^3-L^2+3L+1.
\end{align*}

Similarly, the codimension $1$ stratum has Hodge--Euler characteristic
$$
\e_{HE}^{\SSS_2}(J_{\Gamma,\emptyset})^{\SSS_2}=\e_{HE}^{\SSS_2}(\C_{1,3})^{\SSS_2}=L^4+2L^3-L^2-L+1.
$$
Hence, adding both contributions we obtain
$$
\e_{HE}(\ov J_{\Gamma})=L^5+3L^4-2L^2+2L+2.
$$

\item \textbf{Stratum $\M_{1,2}\times\M_{1,1}$.} The generic curve on this stratum is of compact type, hence, by Lemma~\ref{L:prod}, the compactified Jacobian is the product of the compactified Jacobians $J_{1,2}\to\M_{1,2}$ and $J_{1,1}\to\M_{1,1}$. Thus, using previous computations, we have
$$
\e_{HE}(\overline J_{\Gamma})=(L^3+2L^2-1)(L^2+L)=L^5+3L^4+2L^3-L^2-L.
$$

\item \textbf{Stratum $(\M_{1,1}\times\M_{0,3}\times\M_{1,1})/\SSS_2$.} By Lemma~\ref{L:prod}, we have that
$$
\e_{HE}(\overline J_{\Gamma})=\e_{HE}^{\SSS_2}(\wt J_{1,1}\times J_{0,3}\times\wt J_{1,1})^{\SSS_2}=\e_{HE}^{\SSS_2}(\ov J_{1,1}\times\ov J_{1,1})^{\SSS_2},
$$
since $J_{0,3}\to\M_{0,3}$ is trivial. As $\Aut(\Gamma)=\SSS_2$, the Jacobian over this stratum contributes as the one over $(\M_{1,1}\times\M_{1,1})/\SSS_2$, thus

$$
\e_{HE}(\ov J_{\Gamma})=L^4+L^3+L^2.
$$

\item \textbf{Stratum $\M_{1,1}\times\M_{0,4}/\SSS_2$.} The generic curve in this stratum is the union of a nodal irreducible rational curve $X$ and an elliptic curve $E$ meeting at a point. The compactified Jacobian $\ov J_X$ is stratified by $J_{X,\{e\}}\cong \CC^*$ and $J_{X,\emptyset}\cong 1$, where $e$ is the loop of $\Gamma$. 

The cohomology with compact support of $\M_{0,4}$ as a representation of $\SSS_2$ is 
\begin{equation*}
	H^i_c(\M_{0,4},\QQ)=\begin{cases*}
		\QQ\s_{2}\oplus\QQ\s_{1^2},\textup{ if }i=1,\\
		\QQ(-1)\s_2,\textup{ if }i=2,\\
		0, \textup{ otherwise}.
	\end{cases*}
\end{equation*}
In particular, any Leray spectral sequence over this base must degenerate at the second page for degree reasons.  Therefore, we have
\begin{align}\label{E:M04S2}
	\e_{HE}^{\SSS_2}(\wt J_{\Gamma[X]})&=(\e_{HE}^{\SSS_2}(J_{X,\{e\}})+\e_{HE}^{\SSS_2}(J_{X,\emptyset}))\e_{HE}^{\SSS_2}(\M_{0,4})=\\
	&=(L\s_2-\s_{1^2}+\s_2)(L\s_2-\s_2-\s_{1^2})=L^2\s_2-2L\s_{1^2}.\notag
\end{align}
Taking the invariants with respect to $\Aut(\Gamma[X])=\Aut(\Gamma)=\SSS_2$ and multiplying by $\e_{HE}(\ov J_{\Gamma[E]})=L^2+L$, we obtain
$$
\e_{HE}(\ov J_{\Gamma})=L^4+L^3.
$$

\item \textbf{Stratum $\M_{1,2}\times(\M_{0,3}/\SSS_2)$.} The generic curve of this stratum is the union of an elliptic curve $E$ and an irreducible rational nodal curve $X$ meeting at a point. By Lemma~\ref{L:prod} and previous computations, we have 
\begin{align*}
	\e_{HE}(\ov J_{\Gamma})&=(\e_{HE}^{\SSS_2}(\wt J _{\Gamma[E]})\e_{HE}^{\SSS_2}(\wt J _{\Gamma[X]}))^{\SSS_2}=\\
	&=(L^3+2L^2-1)(L+1)=L^4+3L^3+2L^2-L-1.
\end{align*}

\item \textbf{Stratum $\M_{0,5}/\SSS_2\wr\SSS_2$.} Let $X$ be the generic curve of this stratum, then the poset of graphs in $\P_{\sigma,\Gamma}$ is the following:

\begin{center}
	\scalebox{.7}{
		\begin{tikzpicture}[
			inner/.style={
				draw,
				circle,
				thick,
				minimum size=6mm,
				inner sep=0.5pt,
				font=\small
			},
			outer/.style={
				minimum width=15mm,
				minimum height=15mm,
				inner sep=0pt
			},
			degeneration/.style={
				->,
				thick,
				shorten <=0.3mm,
				shorten >=0.3mm,
				>=stealth
			}
			]
			\node[outer] (A1) at (0,0) {
				\begin{tikzpicture}
					\node[inner] (v) {$0$};
					\draw (v) to[out=45,in=315,looseness=5] (v);
					\draw (v) to[out=134,in=225,looseness=5] (v);
				\end{tikzpicture}%
			};
			
			\node[outer] (B1) at (2.7,-2.25) {
				\begin{tikzpicture}
					\node[inner] (v) {$0$};
					\draw (v) to[out=45,in=315,looseness=5] (v);
				\end{tikzpicture}%
			};
			
			\node[outer] (B2) at (-2.7,-2.25) {
				\begin{tikzpicture}
					\node[inner] (v) {$0$};
					\draw (v) to[out=134,in=225,looseness=5] (v);
				\end{tikzpicture}%
			};
			
			\node[outer] (C1) at (0,-4.5) {
				\begin{tikzpicture}
					\node[inner] {$0$};
				\end{tikzpicture}%
			};

			\draw[degeneration] (B1) -- (A1);
			\draw[degeneration] (B2) -- (A1);
			\draw[degeneration] (C1) -- (B1);
			\draw[degeneration] (C1) -- (B2);
		\end{tikzpicture}
	}
\end{center}
For every $G$ in the poset, we have $\P_{\sigma,\Gamma}(G)=\{b_1(G)-2\}$.

Since $X$ is irreducible, all the strata of the toroidal stratification of $\ov J_X$ are connected. We consider, first, the stratum $J^0_X$ of codimension $0$. The normalisation exact sequence defining the generalised Jacobian is 
\begin{equation*}
	1\to\CC^*\times\CC^*\to J^0_X\to1,
\end{equation*}
since the normalisation $\wt{X}$ of $X$ is a rational curve, and, thus, has trivial Jacobian. Let us denote the three copies of $\SSS_2$ appearing in the wreath product $\SSS_2\wr\SSS_2$ by
\begin{equation*}
	\SSS_2\wr\SSS_2:=\SSS_2\ltimes(\SSS_2^1\times\SSS_2^2),
\end{equation*}
with non-trivial elements $\sigma,\sigma^1,\sigma^2$ respectively.
Let $[\ee_1],[\ee_2]$ be the classes of the loops of $\Gamma$ respectively generating the two copies of $\CC^*$ in the sequence. Then, we have the following actions:
\begin{align*}
	\sigma\cdot([\ee_1],[\ee_2])&=([\ee_2],[\ee_1]);\\ 	\sigma^1\cdot([\ee_1],[\ee_2])&=([-\ee_1],[\ee_2]);\\	\sigma^2\cdot([\ee_1],[\ee_2])&=([\ee_1],[-\ee_2]).
\end{align*}
We identify $\SSS_2\wr\SSS_2$ with the dihedral group 
$$
D_8:=\langle a,x\colon a^4=x^2=1,xax=a^-1\rangle,
$$
by setting $a=\sigma\sigma^1$ and $x=\sigma^1$.
Therefore, by analysing the characters of the induced action in cohomology, we deduce that the rational cohomology groups of $ J^0_{X}$ as representations of $\SSS_2\wr\SSS_2$ are:
\begin{equation}\label{Js2wrs2}
	H^i_c( J_X^{0},\QQ)=\begin{cases*}
		\QQ\langle a\rangle\textbf{ker}, \textup{ if }i=2,\\
		\QQ(-1)\textbf{2dim},\textup{ if }i=3,\\
		\QQ(-2)\textbf{triv},\textup{ if }i=4,\\
		0, \textup{ otherwise},
	\end{cases*}
\end{equation}
with notation as in \url{https://groupprops.subwiki.org/wiki/Dihedral_group:D8#Character_table}, where $\langle\ldots\rangle$\textbf{ker} denotes the one dimensional representation with a given kernel.

The group $\SSS_2\wr\SSS_2\leq \SSS_5$ acts on $\M_{0,5}$ by permuting the $4$ markings corresponding to the half edges. The cohomology with compact support of $\M_{0,5}$ as $\SSS_5$-representation is 
$$
H^i_c(\M_{0,5},\QQ)=\begin{cases*}
	\QQ\s_{311},\textup{ if }i=2,\\
	\QQ(-1)\s_{32},\textup{ if }i=3,\\
	\QQ(-2)\s_5,\textup{ if } i=4,\\
	0,\textup{ otherwise},
\end{cases*}
$$
which restricts as $\SSS_2\wr\SSS_2$-representation to
$$
	H^i_c(\M_{0,5},\QQ)\downarrow_{\SSS_2\wr\SSS_2}=\begin{cases}
		\QQ(\textbf{2dim}^{\oplus 2}\oplus \langle a^2,x\rangle \textbf{ker}\oplus \langle a\rangle\textbf{ker}),\textup{ if }i=2,\\
		\QQ(-1)(\textbf{triv}\oplus \textbf{2dim}\oplus\langle a^2,x\rangle \textbf{ker}\oplus \langle a^2,ax\rangle\textbf{ker}),\textup{ if }i=3,\\
		\QQ(-2)\textbf{triv},\textup{ if } i=4,\\
		0,\textup{ otherwise},
	\end{cases}
$$

Since the cohomology of the fibre is given by \eqref{Js2wrs2}, we observe that all the differentials in the second page of the Leray spectral sequence associated to the fibration 
$$
J_{\Gamma,E(\Gamma)}\to \M_{0,5}
$$
must vanish. Indeed the Tate twists of domain and image of any such differential do not coincide. Hence, the K\"unneth formula gives 
\begin{equation*}
		H^i_c( J_{\Gamma,E(\Gamma)},\QQ)=\begin{cases*}
		\QQ(\textbf{triv}\oplus\textbf{2dim}^{\oplus 2}\oplus\langle a^2,ax\rangle\textbf{ker}), \textup{ if }i=4,\\
		\QQ(-1)(\textbf{triv}^{\oplus 2}\oplus\textbf{2dim}^{\oplus 3}\oplus\langle a\rangle\textbf{ker}^{\oplus 3}\oplus\langle a^2,ax\rangle\textbf{ker}^{\oplus 3}\oplus\langle a^2,x\rangle\textbf{ker}^{\oplus 3}),\textup{ if }i=5,\\
		\QQ(-2)(\textbf{triv}\oplus\textbf{2dim}^{\oplus 5}\oplus\langle a\rangle\textbf{ker}^{\oplus 3}\oplus\langle a^2,ax\rangle\textbf{ker}\oplus\langle a^2,x\rangle\textbf{ker}^{\oplus 2}),\textup{ if }i=6,\\
		\QQ(-3)(\textbf{triv}\oplus\textbf{2dim}^{\oplus 2}\oplus\langle a^2,ax\rangle\textbf{ker}\oplus\langle a^2,x\rangle\textbf{ker}),\textup{ if }i=7,\\
		\QQ(-4)\textbf{triv},\textup{ if }i=8,\\
		0,\textup{ otherwise}.
	\end{cases*}
\end{equation*} 
Hence, by taking the $\SSS_2\wr\SSS_2$-invariant part we obtain
\begin{equation*}
	\e_{HE}^{\SSS_2\wr\SSS_2}(J_{\Gamma,E(\Gamma)})^{\SSS_2\wr\SSS_2}=L^4-L^3+L^2-2L+1.
\end{equation*}

The strata $J_{X(\{e_1\})}$ and $J_{X(\{e_2\})}$ of codimension $1$ are swapped by the action of $\SSS_2$. Hence, the contribution of this stratum is given by the $\SSS_2\times \SSS_2$-invariant part of the fibration over $\M_{0,5}$ with fibre $J_{X(\{e_1\})}$. We have that

$$
H^i_c(\M_{0,5},\QQ)\downarrow_{\SSS_2\times\SSS_2}=\begin{cases}
	\QQ(\s_2\times\s_2\oplus(\s_{1^2}\times\s_2)^{\oplus2}\oplus(\s_2\times\s_{1^2})^{\oplus2}\oplus\s_{1^2}\times\s_{1^2}),\textup{ if }i=2,\\
	\QQ(-1)((\s_2\times\s_2)^{\oplus 2}\oplus\s_{1^2}\times\s_2\oplus\s_2\times\s_{1^2}\oplus\s_{1^2}\times\s_{1^2}),\textup{ if }i=3,\\
	\QQ(-2)\s_2\times\s_2,\textup{ if }i=4,\\
	0,\textup{ otherwise},
\end{cases}
$$
and
$$
H^i_c(J_{X(\{e_1\})},\QQ)=\begin{cases}
	\QQ\s_{1^2}\times\s_2,\textup{ if }i=1,\\
	\QQ(-1)\s_2\times\s_2,\textup{ if }i=2,\\
	0,\textup{ otherwise}.
\end{cases}
$$
Again, the spectral sequence degenerates at the second page. Hence, taking the invariant part after applying the K\"unneth formula, we obtain
$$
\e_{HE}^{\SSS_2\wr\SSS_2}(J_{\Gamma,\{e_2\}}\sqcup J_{\Gamma,\{e_1\}})^{\SSS_2\wr\SSS_2}=L^3-2L^2+2L-2.
$$ 
Finally, since the fibre over the stratum $J_{\Gamma,\emptyset}$ is a point, we have 
$$
J_{\Gamma,\emptyset}\cong\M_{0,5},
$$
and the resulting Hodge--Euler characteristic contribution is
$$
\e_{HE}^{\SSS_2\wr\SSS_2}(J_{\Gamma,\emptyset})^{\SSS_2\wr\SSS_2}=L^2-L
$$
Therefore, summing over all the strata, we obtain
$$
\e_{HE}(\ov J_\Gamma)=L^4-L-1.
$$

\item \textbf{Stratum $(\M_{1,2}\times\M_{0,3})/\SSS_2$.} Let $X$ be the generic curve of this stratum, then the poset associated to the compactified Jacobian $\ov J_X(\sigma)$ is the following:

\begin{center}\scalebox{.8}{
		\begin{tikzpicture}[remember picture, inner/.style={draw,circle,thick,node distance={15mm}},
			outer/.style={node distance={40mm}}]
			\node[outer] (A1) {
				\begin{tikzpicture}
					\node[inner, label=135:$1$] (1) {$1$};
					\node[inner, right of=1, label=45:$-1$] (2) {$0$};
					\node[inner, draw=none, above right=0.2 of 2] (2l) {};
					\draw (1) to [out=45, in=135, looseness=1] (2);
					\draw (1) to [out=315, in=225, looseness=1] (2);
					\draw (2) to (2l);
				\end{tikzpicture}
			};
			\node[outer, right of =A1] (A2) {
				\begin{tikzpicture}
					\node[inner, label=135:$0$] (1) {$1$};
					\node[inner, right of=1, label=45:$0$] (2) {$0$};
					\node[inner, draw=none, above right=0.2 of 2] (2l) {};
					\draw (1) to [out=45, in=135, looseness=1] (2);
					\draw (1) to [out=315, in=225, looseness=1] (2);
					\draw (2) to (2l);
				\end{tikzpicture}
			};
			\node[outer, below=.9 of A1] (B1) {
				\begin{tikzpicture}
					\node[inner, label=135:$0$] (1) {$1$};
					\node[inner, right of=1, label=45:$-1$] (2) {$0$};
					\node[inner, draw=none, above right=0.2 of 2] (2l) {};
					\draw (1) to [out=315, in=225, looseness=1] (2);
					\draw (2) to (2l);
				\end{tikzpicture}
			};
			\node[outer, below=.9 of A2] (B2) {
				\begin{tikzpicture}
					\node[inner, label=135:$0$] (1) {$1$};
					\node[inner, right of=1, label=45:$-1$] (2) {$0$};
					\node[inner, draw=none, above right=0.2 of 2] (2l) {};
					\draw (1) to [out=45, in=135, looseness=1] (2);
					\draw (2) to (2l);
				\end{tikzpicture}
			};
			\draw[->] (B1) -- (A1);		
			\draw[->] (B1) -- (A2);			\draw[->] (B2) -- (A1);			\draw[->] (B2) -- (A2);
	\end{tikzpicture}}
\end{center}
The action of $\Aut(\Gamma)=\SSS_2$ swaps the two edges. In particular, both the top dimensional cells $J^{1,-1}_{X,E(\Gamma)}$ and $J^{0,0}_{X,E(\Gamma)}$ are $\Aut(\Gamma)$-invariant, and contribute in the same way to the Hodge--Euler characteristic of the stratum. 
Moreover, since the Jacobian over $\M_{0,3}$ is trivial, we can reason as in Stratum~$\M_{1,2}/\SSS_2$ and obtain
$$
\e_{HE}^{\SSS_2}(J^{1,-1}_{\Gamma,E(\Gamma)})^{\SSS_2}=\e_{HE}^{\SSS_2}(J^{0,0}_{\Gamma,E(\Gamma)})^{\SSS_2}=L^4+L^3-L^2+1.
$$

The group $\SSS_2$ swaps the two edges $e_1$ and $e_2$ of $\Gamma$, and, hence, the two codimension $1$ elements, thus we can obtain their contribution by considering only one of them with trivial stabiliser. Again by the same computation as above, since the Jacobian over $\M_{0,3}$ is trivial, we have
$$
\e_{HE}^{\SSS_2}(J^{0,-1}_{\Gamma,\{e_1\}}\sqcup J^{0,-1}_{\Gamma,\{e_2\}})^{\SSS_2}=\e_{HE}(J^{0,-1}_{\Gamma,\{e_1\}})=L^3+2L^2-1.
$$

Summing twice the codimension $0$ contribution with the codimension $1$, we obtain
$$\e_{HE}(\ov J_\Gamma)=2L^4+3L^3+1.
$$
\item \textbf{Codimension $3$ strata.} As in the proof of Proposition~\ref{P:eJ2}, each $\M_\Gamma$ contributes for a copy of $L^3$ for each irreducible component of $\ov J_\Gamma$ up to the action of $\Aut(\Gamma)$. The graphs indexing the codimension $3$ strata are the following:
	\begin{center}\scalebox{.8}{
			\begin{tikzpicture}[remember picture, inner/.style={draw,circle,thick,node distance={15mm}},
				outer/.style={node distance={40mm}}]
				\node[outer] (A3) {
					\begin{tikzpicture}
						\node[inner] (1) {$0$};
						\node[inner, right of=1] (2) {$0$};
						\node[inner, draw=none, above right=0.2 of 1] (1l) {};
						\draw (1) to  (2);
						\draw (2) to [out=45, in=315, looseness=5] (2);
						\draw (1) to [out=135, in=225,looseness=5] (1);
						\draw (1) to (1l);
					\end{tikzpicture}
				};
				\node[outer, left of =A3] (A2) {
					\begin{tikzpicture}
						\node[inner] (1) {$0$};
						\node [inner, left of=1] (2) {$0$};
						\node [inner, right of=1] (3) {$1$};
						\node [inner, draw=none, above left=0.2 of 2] (2l) {};
						\draw (1) to [out=135, in=45, looseness=1] (2);
						\draw (1) to [out=225, in=315, looseness=1] (2);
						\draw (1) to  (3);
						\draw (2) to (2l);
					\end{tikzpicture}
				};		
				\node[outer, left of =A2] (A1) {
					\begin{tikzpicture}
						\node[inner] (1) {$0$};
						\node [inner, left of=1] (2) {$0$};
						\node [inner, right of=1] (3) {$1$};
						\node [inner, draw=none, above=0.2 of 1] (1l) {};
						\draw (2) to [out=135, in=225, looseness=5] (2);
						\draw (1) to  (2);
						\draw (1) to  (3);
						\draw (1) to (1l);
					\end{tikzpicture}
				};
				\node[outer, right of =A3] (A4) {
					\begin{tikzpicture}
						\node[inner] (1) {$0$};
						\node [inner, left of=1] (2) {$0$};
						\node [inner, draw=none, above left=0.2 of 2] (2l) {};
						\draw (1) to [out=135, in=45, looseness=1] (2);
						\draw (1) to [out=225, in=315, looseness=1] (2);
						\draw (1) to [out=45, in=315, looseness=5] (1);
						\draw (2) to (2l);
					\end{tikzpicture}
				};
				\node[outer, right of =A4] (A5) {
					\begin{tikzpicture}
						\node[inner] (1) {$0$};
						\node [inner, left of=1] (2) {$0$};
						\node [inner, draw=none, above left=0.2 of 2] (2l) {};
						\draw (1) to [out=135, in=45, looseness=1] (2);
						\draw (1) to [out=225, in=315, looseness=1] (2);
						\draw (1) to (2);
						\draw (2) to (2l);
					\end{tikzpicture}
				};
		\end{tikzpicture}}
	\end{center}
	We observe that the irreducible components of all the associated spaces $\ov J_\Gamma$ are $\Aut(\Gamma)$-invariant, hence their total contribution is $9L^3$, where the coefficient is the sum of the complexities of the graphs.
\end{itemize}
Adding up all the invariant Hodge--Euler characteristics we obtain 
$$\e_{HE}(\ov J_{2,1})=L^6+5L^5+15L^4+20L^3+\textup{ lower degree terms,}$$
hence the claim follows by Poincar\'e duality. This is independently verified with the complete computations over the remaining strata in the ancillary file.
\end{proof}

\begin{remark}
	Note that the results for the open strata $\M_2$ and $\M_{2,1}$ can be deduced from \cite[Proposition~E]{KannanSongVirtualHodgeNumbers}.\\
	The contributions of strata $\M_2$, $\M_{1,2}/\SSS_2$, $\M_{2,1}$ and $\M_{1,2}/\SSS_2$ can be alternatively computed using the relation between the cohomology of local systems on $\M_g$ and $\A_g$, studied by Getzler in \cite{GetzlerTopologicalRecursionGenus2,getzler1999eulercharacteristicslocalsystems} and Petersen in \cite{Petersen_2015}.
\end{remark}

By \cite[Lemma~3.11]{CanningLarson}, the following Lemma concludes the proof of Theorem~\ref{T:CohJ2}.
\begin{lemma}\label{L:chow}
	The fine compactified universal Jacobians $\ov J_2$ and $\ov J_{2,1}$ have the Chow--K\"unneth generation property (CKgP) (\cite[Definition~3.1]{CanningLarson}).
\end{lemma}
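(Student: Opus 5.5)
We will prove the CKgP by exploiting the same double stratification used in the proofs of Proposition~\ref{P:eJ2} and Proposition~\ref{P:eJ21}. We rely on the standard properties of the CKgP from \cite{CanningLarson}. First, if $Z\subset Y$ is closed and both $Z$ and $Y\setminus Z$ have the CKgP, then so does $Y$. Second, the CKgP is preserved under finite group quotients and under passing to images of surjective proper maps from stacks with the CKgP. Third, affine bundles, projective bundles and $(\CC^*)^r$-torsors have the CKgP whenever the base does. Fourth, it holds for any stack with a stratification by quotients of products of $\M_{0,n}$, $\M_{1,n}$ ($n\le 10$) and $\M_{2,n}$ ($n\le 3$ or so), by \cite{CanningLarson}. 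Applying the first property repeatedly, it suffices to check the CKgP on each locally closed piece $J^{\un d}_{\Gamma,E(G)}/\Aut(\Gamma)$ of the decomposition of $\ov J_{2,n}(\sigma)$ recalled in Section~\ref{S:Preliminaries}. By the second property, we may moreover pass to the cover $J^{\un d}_{\Gamma,E(G)}\to\wt\M_\Gamma$ and forget the automorphism group.

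For each piece, we will use the principal $(\CC^*)^{b_1(G)}$-bundle $J^{\un d}_{X(E(G))}\to J^{\un d}_{\wt X}$. This reduces the problem to the relative Jacobian of the normalisation over $\wt\M_\Gamma=\prod_v\M_{g(v),n(v)}$, which by Lemma~\ref{L:prod} is a fibre product of Jacobians of the vertex curves. Genus $0$ vertices contribute trivially. For genus $1$ vertices, the Abel--Jacobi map identifies the Jacobian with the universal curve $\C_{1,m}$, which is stratified by $\M_{1,m+1}$ and copies of $\M_{1,m}$. For the genus $2$ vertex, which occurs only over the open strata $\M_2$ and $\M_{2,1}$, we use the Abel--Jacobi maps \eqref{E:AJmapM2} and \eqref{E:AJmapM21}. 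The symmetric square of the universal curve surjects properly onto $J^2$ and is an isomorphism off a section, so $J^2$ has the CKgP as soon as $\C^2/\SSS_2$ (respectively $\C^3/\SSS_2$) does. These in turn are stratified by $\M_{2,2}/\SSS_2,\M_{2,1}$ (respectively $\M_{2,3},\M_{2,2},\M_{2,1}$ modulo $\SSS_2$). Thus every piece is built out of tori, finite quotients and products of $\M_{g,m}$ with $g\le 2$ and small $m$. All of these are known to have the CKgP; to our knowledge, $\M_{2,m}$ for $m\le 3$ is covered in \cite{CanningLarson}.

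The main obstacle is that the CKgP is not automatically preserved under fibre products over a base. The relevant spaces are therefore products of universal curves over a common $\M_{g,m}$, or torus bundles over such products, rather than honest products of moduli spaces. We will handle this by iterating the stratification: a fibre product of universal curves over $\M_{g,m}$ is stratified by the moduli spaces $\M_{g,m+k}$ according to which points coincide with each other or with the markings, exactly as in the computations of $\e_{HE}(\C^3/\SSS_2)$ and $\e_{HE}(\C_{1,3})$. In this way every stratum becomes a genuine product of moduli spaces of smooth pointed curves, possibly times a torus, on which the CKgP holds by \cite[Proposition~3.5 and Theorem~1.4]{CanningLarson}. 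The remaining care point is the torus factors: we use that a torus $(\CC^*)^r$-torsor over $Y$ is an open subset of a vector bundle over $Y$, so its CKgP follows from that of $Y$ via the excision property. With all pieces checked, the first property gives the CKgP for $\ov J_2$ and $\ov J_{2,1}$.
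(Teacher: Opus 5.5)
Your proposal is correct and follows essentially the same route as the paper: reduce via the two stratifications and finite quotients to the toroidal strata, realise each as an open subset of a bundle over $\prod_v J^{d(v)}_{g(v),n(v)}$, and then get the CKgP for these universal Jacobians from the Abel--Jacobi maps in genus $2$, from $J_{1,m}\cong\C_{1,m}$ in genus $1$, and from $J_{0,m}\cong\M_{0,m}$ in genus $0$. The differences are cosmetic: the paper uses a projective bundle rather than a vector bundle, and your fibre-product ``obstacle'' is milder than stated. Over $\wt\M_\Gamma$ the Jacobian of the normalisation is already an honest product $\prod_v J^{d(v)}_{g(v),n(v)}$, so Lemma~\ref{L:prod} is not needed there, and fibre products over a common base only appear inside the genus-$2$ Abel--Jacobi step and in $\C_{1,m}$, which you stratify exactly as the paper does.
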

\begin{proof}
	All the unjustified statements in the proof follow from the Lemmas in \cite[Section~3]{CanningLarson}.
	
	Since the CKgP is compatible with stratifications and finite quotients, we only need to show that all the $\wt J_\Gamma$	 appearing in the stratification of $\ov J_2$ (resp. $\ov J_{2,1}$) have it. For any such $\Gamma$, $\wt J_{\Gamma}$ is stratified by extensions
	$$
	1\longrightarrow(\CC^*)^{b_1(G)}\longrightarrow J^{\un d}_{\Gamma,E(G)}\longrightarrow\prod_{v\in \Gamma} J^{ d(v)}_{g(v),n(v)}\longrightarrow 1,
	$$ 
	where $G\leq\Gamma$ is a connected spanning subgraph. Thus, the claim follows from showing that every $\wt J^{\un d}_{\Gamma,E(G)}$ has the CKgP. We observe that $\wt J^{\un d}_{\Gamma,E(G)}$ is an open subset of a projective bundle over $\prod_{v\in \Gamma} J^{ d(v)}_{g(v),n(v)}$, and hence has the CKgP if every $J^{ d'}_{g',n'}\cong J_{g',n'}$ appearing has it.  
	
	We observe that, for these universal Jacobians to appear, $(g',n')$ must satisfy $2g'+n'\leq 5$, and show that in this range any $J_{g',n'}$ has the CKgP.
	\begin{itemize}
		\item \textbf{$g=2$.} The Abel--Jacobi map $\C^2_2/\SSS_2\to J_2$ (resp. $\C_{2,1}^3/{\SSS_2}\to J_{2,1}$) in Diagram~\eqref{E:AJmapM2} (resp. Diagram~\eqref{E:AJmapM21}) is proper and surjective. Such universal curves are stratified by $\M_{2,m}$, with $m\leq3$, which have the CKgP by \cite[Theorem~1.1]{canning2024hyperell} and \cite[Section~6]{CanningLarson}. Hence $J_2$ and $J_{2,1}$ have it too.
		\item \textbf{$g=1$.} A universal Jacobian $J_{1,n'}$ is isomorphic to the universal curve $\C_{1,n'}=\M_{1,n'+1}\bigsqcup_{i=1}^{n'}\M_{1,n'}$, where $n'\leq 3$, and hence has the CKgP by \cite[Theorem~2.0.1]{Belorousski1998} and \cite[Section~5.2]{CanningLarson}.
		\item \textbf{$g=0$.} A universal Jacobian $J_{0,n'}$ is isomorphic to $\M_{0,n'}$ and hence has the CKgP as it is the complement of a hyperplane arrangement in $\AA^{n'-3}$.
	\end{itemize}
\end{proof}

\bibliographystyle{alpha}
\bibliography{bibtex}
\end{document}